\newcommand{\C}{\mathbb{C}}
\newcommand{\QQ}{\mathbb{Q}}
\newcommand{\NN}{\mathbb{N}}
\newcommand{\PP}{\mathbb{P}}
\newcommand{\LLL}{\mathbb{L}}
\newcommand{\MM}{\mathcal M}
\newcommand{\rom}{\romannumeral}
\newcommand*{\da@rightarrow}{\mathchar"0\hexnumber@\symAMSa 4B }
\newcommand*{\da@leftarrow}{\mathchar"0\hexnumber@\symAMSa 4C }
\newcommand*{\xdashrightarrow}[2][]{%
  \mathrel{%
    \mathpalette{\da@xarrow{#1}{#2}{}\da@rightarrow{\,}{}}{}%
  }%
}
\newcommand{\xdashleftarrow}[2][]{%
  \mathrel{%
    \mathpalette{\da@xarrow{#1}{#2}\da@leftarrow{}{}{\,}}{}%
  }%
}
\newcommand*{\da@xarrow}[7]{%
  % #1: below
  % #2: above
  % #3: arrow left
  % #4: arrow right
  % #5: space left 
  % #6: space right
  % #7: math style 
  \sbox0{$\ifx#7\scriptstyle\scriptscriptstyle\else\scriptstyle\fi#5#1#6\m@th$}%
  \sbox2{$\ifx#7\scriptstyle\scriptscriptstyle\else\scriptstyle\fi#5#2#6\m@th$}%
  \sbox4{$#7\dabar@\m@th$}%
  \dimen@=\wd0 %
  \ifdim\wd2 >\dimen@
    \dimen@=\wd2 %   
  \fi
  \count@=2 %
  \def\da@bars{\dabar@\dabar@}%
  \@whiledim\count@\wd4<\dimen@\do{%
    \advance\count@\@ne
    \expandafter\def\expandafter\da@bars\expandafter{%
      \da@bars
      \dabar@ 
    }%
  }%  
  \mathrel{#3}%
  \mathrel{%   
    \mathop{\da@bars}\limits
    \ifx\\#1\\%
    \else
      _{\copy0}%
    \fi
    \ifx\\#2\\%
    \else
      ^{\copy2}%
    \fi
  }%   
  \mathrel{#4}%
}
\newcommand\undermat[2]{
  \makebox[0pt][l]{$\smash{\underbrace{\phantom{
    \begin{matrix}#2\end{matrix}}}_{\text{$#1$}}}$}#2}
\newtheorem{convention}{Conventions}
\newtheorem{nonumbering}{Theorem}
\newtheorem{nonumberingp}{Proposition}
 \journalname{Annali dell'Universita di Ferrara}
\begin{document}

\title{Some elementary examples of quartics with finite--dimensional motive}

\author{Robert Laterveer}

\institute{CNRS - IRMA, Universit\'e de Strasbourg \at
              7 rue Ren\'e Descartes \\
              67084 Strasbourg cedex\\
              France\\
              \email{laterv@math.unistra.fr}   }

\date{Received: date / Accepted: date}
%\date{}
% The correct dates will be entered by the editor

\maketitle

\begin{abstract}
This small note contains some easy examples of quartic hypersurfaces that have finite--dimensional motive. As an illustration, we verify a conjecture of Voevodsky (concerning smash--equivalence) for some of these special quartics.
 \end{abstract}

\keywords{Algebraic cycles \and Chow groups \and motives \and finite--dimensional motives \and quartics} 
%\and Bloch conjecture

\subclass{14C15, 14C25, 14C30.}

\section{Introduction}

The notion of finite--dimensional motive, developed independently by Kimura and O'Sullivan \cite{Kim}, \cite{An}, \cite{MNP}, \cite{J4}, \cite{Iv} has given important new impetus to the study of algebraic cycles. To give but one example: thanks to this notion, we now know the Bloch conjecture is true for surfaces of geometric genus zero that are rationally dominated by a product of curves \cite{Kim}. It thus seems worthwhile to find concrete examples of varieties that have finite--dimensional motive, this being (at present) one of the sole means of arriving at a satisfactory understanding of Chow groups. 

The present note aims to contribute something to the list of examples of varieties with finite--dimensional motive, by considering quartic hypersurfaces.
In any dimension, there is one famous quartic known to have finite--dimensional motive: the Fermat quartic
  \[ (x_0)^4+(x_1)^4+\cdots+(x_{n+1})^4=0\ .\]
  The Fermat quartic has finite--dimensional motive because it is rationally dominated by a product of curves, and the indeterminacy locus is again of Fermat type \cite{Sh}. 
  The main result of this note presents, in any dimension, a family of quartics (containing the Fermat quartic) with finite--dimensional motive:
  
  \begin{nonumbering}[=theorem \ref{main}] The following quartics have finite--dimensional motive:

\noindent
(\rom1) a smooth quartic $X\subset\PP^{3k-1}(\C)$ given by an equation
  \[  f_1(x_0,x_1,x_2)+f_2(x_3,x_4,x_5)+\cdots + f_k(x_{3k-3},x_{3k-2},x_{3k-1})  =0\ ,\]
  where the $f_i$ define smooth quartic curves;
  
  \noindent
  (\rom2) a smooth quartic $X\subset\PP^{3k}(\C)$ given by an equation
   \[ f_1(x_0,x_1,x_2)+f_2(x_3,x_4,x_5)+\cdots + f_k(x_{3k-3},x_{3k-2},x_{3k-1}) +(x_{3k})^4 =0\ ,\]
  where the $f_i$ define smooth quartic curves;
   
  \noindent
  (\rom3) a smooth quartic $X\subset\PP^{3k+1}(\C)$ given by an equation 
   \[ f_1(x_0,x_1,x_2)+ f_2(x_3,x_4,x_5)+\cdots + f_k(x_{3k-3},x_{3k-2},x_{3k-1}) +h(x_{3k},x_{3k+1}) =0\ ,\]
  where the $f_i$ define smooth quartic curves;   
   \end{nonumbering}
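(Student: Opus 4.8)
The plan is to reduce all three families to the finite--dimensionality of \emph{curves}, using the inductive (separated--variable) structure of Fermat--type hypersurfaces together with the closure properties of the class of finite--dimensional motives. I will use throughout that finite--dimensional motives are stable under direct sums, tensor products and direct summands, that curves have finite--dimensional motive \cite{Kim}, and that a dominant rational map $Y\dashrightarrow X$ between smooth projective varieties of the \emph{same} dimension induces, after resolving indeterminacy, a morphism realizing $M(X)$ as a direct summand of the motive of a resolution of $Y$; hence $X$ has finite--dimensional motive as soon as $Y$ does (the centres needed to resolve the explicit maps below stay within the same inductive class, so they introduce no new difficulty).

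The engine of the reduction is Shioda's construction \cite{Sh}. Writing $X_g$ for the quartic cut out by a form $g$, and letting $\zeta$ be a primitive $8$-th root of unity, the assignment $([x:s],[y:t])\mapsto[t\,x:\zeta\,s\,y]$ defines a dominant rational map
\[ X_{A+s^4}\times X_{B+t^4}\ \dashrightarrow\ X_{A+B} \]
whenever $A$ and $B$ are quartic forms in disjoint sets of variables (indeed $A(tx)+B(\zeta sy)=t^4A(x)+\zeta^4s^4B(y)=-s^4t^4+s^4t^4=0$). Consequently $M(X_{A+B})$ is a direct summand of $M(X_{A+s^4})\otimes M(X_{B+t^4})$.

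I would then run the reduction by induction on $k$, splitting off one block at a time: this turns a quartic of type (\rom1) into a product of two quartics of type (\rom2), a type-(\rom2) quartic into a product of a type-(\rom2) and a type-(\rom3) quartic, and so on. The process bottoms out at three kinds of ``atoms'': finite sets of points (from pieces of the form $s^4+t^4$); smooth plane quartic curves (from the binary form $h$ in (\rom3), as well as Fermat--type curves produced by accumulated fourth powers); and the quartic surfaces $S_i:=\{f_i(x_0,x_1,x_2)+z^4=0\}$ coming from the ternary forms $f_i$. The first two kinds of atoms are finite--dimensional (points trivially, curves by \cite{Kim}), so by the closure properties everything comes down to the surfaces $S_i$.

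The main obstacle is therefore to prove that each $S_i$ has finite--dimensional motive. These are $K3$ surfaces: $S_i$ is the cyclic $\mu_4$-cover of $\PP^2$ totally ramified over the smooth plane quartic $C_i=\{f_i=0\}$, and Shioda's machine does \emph{not} split it further, since a general ternary quartic is not a sum of binary forms. My plan is to exploit the intermediate double cover: the substitution $z\mapsto z^2$ exhibits $S_i$ as a double cover $S_i\to T_i$ of the degree--two del Pezzo surface $T_i=\{u^2+f_i=0\}$, branched along a copy of $C_i$. Since $T_i$ is rational its motive is a sum of Tate motives, hence finite--dimensional; splitting $M(S_i)$ into invariant and anti--invariant parts for the covering involution reduces the problem to the anti--invariant summand, which carries the transcendental part $t_2(S_i)$. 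The remaining step is to show $t_2(S_i)$ finite--dimensional by producing a correspondence onto it from a product of curves, the natural candidates being $C_i$ together with an auxiliary Fermat--type curve carrying the $\ZZ[i]$-action induced by $\mu_4$. I expect this to be the crux of the argument: for a general $f_i$ the transcendental lattice of $S_i$ has rank $14$ and is not \emph{visibly} governed by a product of curves, so the heart of the matter is to make the $\ZZ[i]$-symmetry of these special K3 surfaces trace out such a correspondence explicitly.
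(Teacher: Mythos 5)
Your reduction scheme is essentially the one the paper itself runs: your map $X_{A+s^4}\times X_{B+t^4}\dashrightarrow X_{A+B}$ is exactly the Katsura--Shioda splitting (proposition \ref{sh}, i.e.\ \cite[Remark 1.10]{KS}), and your remark that the blow--up centres resolving the indeterminacy stay inside the inductive class matches the paper's treatment. The difficulty is that you have not actually proved the theorem. You correctly isolate the crux --- the finite--dimensionality of the $K3$ surfaces $S_i=\{f_i(x_0,x_1,x_2)+z^4=0\}$, the one atom that separation of variables cannot split further --- and then you explicitly leave it open (``I expect this to be the crux of the argument''). What you offer in its place (pass to the degree--two del Pezzo quotient, split off the anti--invariant summand, and hope the $\ZZ[i]$--action traces out a correspondence from a product of curves onto $t_2(S_i)$) is a plan, not an argument; as you yourself observe, the transcendental lattice has rank $14$ for general $f_i$ and is not visibly governed by curves. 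Since every other ingredient of your proof is formal (stability of finite--dimensionality under products, summands, and blow--ups with finite--dimensional centres, plus curves and points), the entire content of the statement is concentrated in precisely the step you have deferred.

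The paper closes this gap by citation: by \cite[Example 11.3]{vG}, the $\mu_4$--cover $S_i\to\PP^2$ branched along a smooth plane quartic \emph{is} rationally dominated by a product of curves, and such surfaces have finite--dimensional motive by Kimura \cite{Kim}. So your instinct that the $\ZZ[i]$--symmetry is what makes these special $K3$ surfaces tractable is the right one, but the domination must be exhibited or located in the literature for the proof to exist. Two minor points of comparison: the paper's base case for (\rom3) (and its handling of the quartic threefold factors in (\rom2)) invokes Vial's theorem \cite[Theorem 4]{V2} that smooth quartic threefolds have $A^\ast_{AJ}=0$ and hence finite--dimensional motive, whereas your more uniform splitting would reduce those threefolds to the same surface atoms --- a small simplification, but only once the surface case is settled; and your transfer principle along generically finite dominant rational maps is correct and is exactly how the paper exploits proposition \ref{sh}.
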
  

The proof is an elementary argument, based on the fact that the inductive structure exhibited by Shioda \cite{Sh}, \cite{KS} still applies to this kind of hypersurfaces.

To illustrate how nicely finite--dimensionality allows to understand algebraic cycles, we provide an application to a conjecture of Voevodsky's \cite{Voe}:

\begin{nonumberingp}[=proposition \ref{smash}] Let $X$ be a quartic as in theorem \ref{main}, and suppose $X$ has odd dimension. Then numerical equivalence and smash--equivalence coincide for all algebraic cycles on $X$.
\end{nonumberingp}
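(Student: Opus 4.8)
The plan is to exploit the very simple shape of the motive of an odd--dimensional quartic hypersurface, reducing everything to Kimura's vanishing of high symmetric powers for oddly finite--dimensional motives. Write $n=\dim X$, which is odd by hypothesis. Since $X\subset\PP^{n+1}(\C)$ is a smooth hypersurface, the Lefschetz hyperplane theorem gives $H^i(X)\cong H^i(\PP^{n+1})$ for $i\neq n$; in particular all the even cohomology is one--dimensional and algebraic (spanned by powers of the hyperplane class), while the only odd cohomology sits in the middle degree $n$. Using the powers $H^j$ of the hyperplane section to build orthogonal projectors, the Chow motive splits as
\[ h(X)\;=\;\Big(\bigoplus_{j}\LLL^{\,j}\Big)\ \oplus\ h^n(X)\ ,\]
with the Tate summands $\LLL^{\,j}$ accounting for the even (algebraic) cohomology and $h^n(X)$ for the middle, odd--degree piece. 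By theorem \ref{main} the motive $h(X)$ is finite--dimensional, so the summand $h^n(X)$ is finite--dimensional as well; being concentrated in odd cohomological degree, it is \emph{oddly} finite--dimensional, i.e.\ $\mathrm{Sym}^{N+1}h^n(X)=0$ for some~$N$.

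Next I would split an arbitrary numerically trivial cycle according to this decomposition. Let $\pi_{\mathrm{ev}}$ and $\pi_{\mathrm{odd}}=\mathrm{id}-\pi_{\mathrm{ev}}$ be the idempotent correspondences cutting out the even (Tate) and the odd ($h^n$) parts. For $z\in CH^*(X)$ numerically trivial, both $\pi_{\mathrm{ev}}(z)$ and $\pi_{\mathrm{odd}}(z)$ are again numerically trivial: indeed, for any cycle $w$ of complementary dimension the projection formula gives $\pi(z)\cdot w=z\cdot{}^{t}\!\pi(w)=0$. On the even part the cycle $\pi_{\mathrm{ev}}(z)$ lives in the Tate summands $\bigoplus_j\LLL^{\,j}$, where the intersection (degree) pairing is perfect, so numerical triviality forces actual rational triviality; hence $\pi_{\mathrm{ev}}(z)=0$ and $z=\pi_{\mathrm{odd}}(z)$ already sits entirely in the $h^n(X)$ part.

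It then remains to prove that a cycle supported on the oddly finite--dimensional motive $h^n(X)$ is smash--nilpotent, and here I would invoke Kimura's symmetrisation argument \cite{Kim}. Viewing $z=\pi_{\mathrm{odd}}(z)$ as a morphism $\mathbf{1}\to h^n(X)(\ast)$, its $k$--fold external power $z^{\times k}\in CH^*(X^k)$ corresponds to the motivic tensor power $z^{\otimes k}$, which by construction is invariant under the (unsigned) permutation action of the symmetric group on the factors; consequently it factors through the symmetric power $\mathrm{Sym}^{k}h^n(X)$. Taking $k=N+1$ and using $\mathrm{Sym}^{N+1}h^n(X)=0$ yields $z^{\times(N+1)}=0$ in $CH^*(X^{N+1})$, that is, $z$ is smash--nilpotent; since $z=\pi_{\mathrm{odd}}(z)$, the original cycle is smash--nilpotent too. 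As smash--equivalence is always finer than numerical equivalence, the two coincide.

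The one genuinely non--formal ingredient, and the step I expect to require the most care, is the passage to the symmetric power in the previous paragraph: it rests on the unsigned commutativity constraint of Chow motives, under which the external power of a \emph{single} cycle is automatically symmetric, so that it is the \emph{oddness} of $h^n(X)$ (rather than evenness) that makes the relevant symmetric power vanish. This is also exactly where the hypothesis that $\dim X$ be odd is indispensable: for even $n$ the middle cohomology would contribute an \emph{evenly} finite--dimensional summand carrying genuine non--Tate numerically trivial cycles, and for such a summand the external power of a cycle lands in the symmetric powers, which need not vanish, so the argument would break down.
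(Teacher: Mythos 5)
Your proposal is correct and follows essentially the same route as the paper: decompose $h(X)$ into Tate summands plus the middle piece $h_n(X)$, observe that numerically trivial cycles die on the Tate part and that $h_n(X)$ is oddly finite--dimensional because $n$ is odd, then conclude by Kimura's result that every cycle on an oddly finite--dimensional motive is smash--nilpotent. The only difference is that you unwind the proof of Kimura's \cite[Proposition 6.1]{Kim} (the factorisation of $z^{\times k}$ through $\mathrm{Sym}^k h_n(X)$) where the paper simply cites it.
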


(For the definition of smash--equivalence, cf. definition \ref{sm}.)

%Another application is motivated by conjectures of Voisin \cite{V9} concerning the Chow groups $A^\ast$ of self--products of certain varieties:

%\begin{nonumberingp}[=proposition \ref{vois6}] 
%Let $X\subset\PP^7(\C)$ be a smooth quartic sixfold defined by an equation
%  \[ f_1(x_0,x_1,x_2)+ f_2(x_3,x_4,x_5)+(x_6)^4+(x_7)^4=0\ .\]
%Let $a,a^\prime\in A^5_{hom}(X)$. Then we have equality
%      \[ a\times a^\prime=a^\prime\times a\ \ \ \hbox{in}\ A^{10}(X\times X)\ .\]
%\end{nonumberingp}

%Conjecturally, this is true for {\em all\/} quartic sixfolds. For the special quartics of proposition \ref{vois6}, this is proven as follows. Using the nice inductive structure, we reduce to a problem about $0$--cycles on a product of certain quartic $K3$ surfaces. This last problem can be solved thanks to finite--dimensionality, plus the fact that an algebraic Kuga--Satake correspondence exists for these $K3$ surfaces \cite{vG}.

\vskip0.4cm

\begin{convention} All varieties will be projective irreducible varieties over $\C$.

For smooth $X$ of dimension $n$, we will denote by $A^j(X)=A_{n-j}(X)$ the Chow group $CH^j(X)\otimes{\QQ}$ of codimension $j$ cycles under rational equivalence. The notations $A^j_{hom}(X)$ and $A^j_{AJ}(X)$ will denote the subgroup of homologically trivial resp. Abel--Jacobi trivial cycles. The category $\MM_{\rm rat}$ will denote the (contravariant) category of Chow motives with $\QQ$--coefficients over $\C$.
For a smooth projective variety, $h(X)=(X,\Delta_X,0)$ will denote its motive in $\MM_{\rm rat}$. 

The group $H^\ast(X)$ will denote singular cohomology with $\QQ$--coefficients.

%In this note, the word {\sl variety\/} will refer to a quasi--projective irreducible algebraic variety over $\C$, endowed with the Zariski topology. A {\sl subvariety\/} is a (possibly reducible) reduced subscheme which is equidimensional. 

%{\bf All Chow groups will be with rational coefficients}: we will denote by $A_jX$ the Chow group of $j$--dimensional cycles on $X$ with $\QQ$--coefficients; for $X$ smooth of dimension $n$ the notations $A_jX$ and $A^{n-j}X$ will be used interchangeably. 
%In the rare cases we will have something to say about Chow groups with integral coefficients, we will indicate this by writing $A^\ast X_{\ZZ}$.
%The notation $A^j_{hom}X$, resp. $A^j_{AJ}X$ will be used to indicate the subgroups of homologically trivial, resp. Abel--Jacobi trivial cycles.
%For a morphism $f\colon X\to Y$, we will write $\Gamma_f\in A_\ast(X\times Y)$ for the graph of $f$.

%To avoid heavy notation, if $\tau\colon Y\to X$ is a closed inclusion and $a\in A_iY$, we will frequently write $a\in A_iX$ to indicate the proper push--forward $\tau_\ast(a)$. Likewise, for any inclusion $Y\subset X$ and $b\in A^jX$ we will often write
%  \[   b\vert_{Y}\ \ \in A^jY\]
 % to indicate the cycle class $\tau^\ast(b)$.

%The Griffiths group $\grif^j$ is the group of codimension $j$ cycles that are homologically trivial modulo algebraic equivalence, again with $\QQ$--coefficients. 

%In an effort to lighten notation, we will write $H^j(X)$ to indicate singular cohomology $H^j(X,\QQ)$.
% (resp. Borel--Moore homology $H_j(X,\QQ)$).
\end{convention}

\section{Finite--dimensionality}

We refer to \cite{Kim}, \cite{An}, \cite{MNP}, \cite{Iv}, \cite{J4} for the definition of finite--dimensional motive. 
An essential property of varieties with finite--dimensional motive is embodied by the nilpotence theorem:

\begin{theorem}[Kimura \cite{Kim}]\label{nilp} Let $X$ be a smooth projective variety of dimension $n$ with finite--dimensional motive. Let $\Gamma\in A^n(X\times X)_{}$ be a correspondence which is numerically trivial. Then there is $N\in\NN$ such that
     \[ \Gamma^{\circ N}=0\ \ \ \ \in A^n(X\times X)_{}\ .\]
\end{theorem}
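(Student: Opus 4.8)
The plan is to read the statement motivically: since $\dim X=n$, the group $A^n(X\times X)$ is precisely $\mathrm{End}(h(X))$ in $\MM_{\rm rat}$, composition of correspondences is composition of endomorphisms, and $\Gamma^{\circ N}$ is the $N$--fold iterate. So the assertion to be proved is the purely categorical one that a \emph{numerically trivial endomorphism of a finite--dimensional motive is nilpotent}. The whole proof will convert numerical triviality into the vanishing of certain trace invariants, and then feed those vanishings into a Cayley--Hamilton identity that lives in any $\QQ$--linear rigid tensor category.

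First I would reduce to the two ``pure'' cases by exploiting the definition of finite--dimensionality. Write $h(X)=M_+\oplus M_-$ with $M_+$ evenly finite--dimensional (the $(a+1)$--st exterior power $\wedge^{a+1}M_+=0$ for some $a$) and $M_-$ oddly finite--dimensional ($\mathrm{Sym}^{b+1}M_-=0$ for some $b$), and decompose $\Gamma$ as a matrix $\left(\begin{smallmatrix}\alpha&\beta\\ \gamma&\delta\end{smallmatrix}\right)$ with $\alpha\in\mathrm{End}(M_+)$, $\delta\in\mathrm{End}(M_-)$, $\beta\in\mathrm{Hom}(M_-,M_+)$ and $\gamma\in\mathrm{Hom}(M_+,M_-)$. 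In the semisimple category of numerical motives there are no non--zero maps between an even and an odd finite--dimensional object: such a map would split off a common direct summand that is simultaneously evenly and oddly finite--dimensional, hence zero. Thus $\beta$ and $\gamma$ are automatically numerically trivial; since $\Gamma$ is too, so are $\alpha$ and $\delta$, and so are the endomorphisms $\beta\circ\gamma\in\mathrm{End}(M_+)$ and $\gamma\circ\beta\in\mathrm{End}(M_-)$.

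The heart of the matter is the pure even case: if $f\in\mathrm{End}(M_+)$ is numerically trivial and $\wedge^{a+1}M_+=0$, then $f$ is nilpotent. Here I would invoke the tensor Cayley--Hamilton identity
\[ \sum_{i=0}^{a+1}(-1)^i\,\lambda_i(f)\,f^{\,a+1-i}=0,\qquad \lambda_i(f)=\mathrm{tr}\bigl(\wedge^i f\bigr)\in\mathrm{End}(\mathbf 1)=\QQ, \]
which holds precisely because the vanishing of $\wedge^{a+1}M_+$ makes the associated antisymmetrising projector zero. The scalars $\lambda_i(f)$ are traces of correspondences, hence depend only on the numerical class; since $f$ is numerically trivial, every power $f^{\circ j}$ is numerically trivial and all of its traces vanish, so $\lambda_i(f)=0$ for $i\ge 1$ and the identity collapses to $f^{\,a+1}=0$. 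The odd case is the exact super--analogue, with symmetric powers in place of exterior powers, and yields $\delta^{\,b+1}=0$. Applying this to $\alpha,\delta$ and to $\beta\circ\gamma,\gamma\circ\beta$ shows that all four of these are nilpotent.

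Finally I would reassemble the blocks. Expanding $\Gamma^{\circ N}$, each matrix entry is a sum of composable words in $\alpha,\beta,\gamma,\delta$ of length $N$, i.e.\ walks of length $N$ on the two vertices $+,-$ with loops $\alpha,\delta$ and crossing edges $\beta,\gamma$. Once $N$ is large every such word must contain, as a consecutive factor, either a high power $\alpha^{\circ p}$ or $\delta^{\circ p}$, or a long alternating string $(\beta\circ\gamma)^{\circ q}$ or $(\gamma\circ\beta)^{\circ q}$; by the previous step one of these factors vanishes, so the whole word is zero and $\Gamma^{\circ N}=0$. I expect the two genuine obstacles to be, first, setting up the tensor Cayley--Hamilton identity rigorously inside the Kimura--O'Sullivan framework of evenly/oddly finite--dimensional objects (this is the one step that is categorical rather than formal), and, second, controlling the mixed off--diagonal contributions in this reassembly, where one must check that the bounded nilpotence of the diagonal blocks and of $\beta\circ\gamma,\gamma\circ\beta$ genuinely propagates to nilpotence of the full matrix.
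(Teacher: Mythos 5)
The paper itself offers no proof of this statement---it is quoted verbatim from Kimura with a citation---so the comparison can only be with Kimura's own argument. Your treatment of the pure cases is exactly the heart of that argument and is correct: identify $A^n(X\times X)$ with $\operatorname{End}(h(X))$, split $h(X)=M_+\oplus M_-$, and for a numerically trivial $f$ on the even part use the tensor Cayley--Hamilton identity forced by $\wedge^{a+1}M_+=0$, noting that $\operatorname{tr}(f^{\circ j})$ is a numerical invariant, hence zero, so that Newton's identities kill every $\lambda_i(f)$ with $i\ge 1$ and leave $f^{\circ(a+1)}=0$; dually with $\operatorname{Sym}$ on the odd part. (One small simplification: you do not need semisimplicity of numerical motives to see that $\beta$ and $\gamma$ are numerically trivial; all four blocks have the form $p\circ\Gamma\circ q$ for fixed correspondences $p,q$, and Lieberman's lemma shows such compositions preserve numerical triviality.)

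The reassembly step, however, contains a genuine gap---precisely the one you flagged as an obstacle. Over a noncommutative ring, nilpotence of $\alpha$, $\delta$, $\beta\circ\gamma$ and $\gamma\circ\beta$ does \emph{not} imply nilpotence of the $2\times 2$ matrix, and your combinatorial claim is false as stated: the composable word $(\alpha\circ\beta\circ\delta\circ\gamma)^{\circ m}$, which occurs in the $(+,+)$ entry of $\Gamma^{\circ 4m}$, contains no consecutive factor $\alpha^{\circ 2}$, $\delta^{\circ 2}$, $\beta\circ\gamma$ or $\gamma\circ\beta$, so taking $N$ large never forces one of your vanishing subwords to appear. The missing idea in Kimura's proof is that the off-diagonal blocks are \emph{smash-nilpotent}: a morphism $M_+\to M_-$ is a cycle on the motive $M_+^\vee\otimes M_-$, which is oddly finite-dimensional (even tensor odd is odd), hence smash-nilpotent by the very Proposition 6.1 of Kimura quoted at the end of this paper; similarly for $M_-\to M_+$. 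Writing $\Gamma=g+h$ with $g$ diagonal and $h$ off-diagonal, $\Gamma^{\circ N}$ equals $g^{\circ N}$ plus a sum of words each containing at least one $h$; the first term vanishes for large $N$ by the pure case, while each remaining word lies in the ideal of smash-nilpotent morphisms (stable under composition on either side and under $\QQ$-linear combination), so $\Gamma^{\circ N}$ is smash-nilpotent and therefore nilpotent, since $h^{\otimes m}=0$ implies $h^{\circ m}=0$. Without this ideal-theoretic input (or an equivalent statement that the off-diagonal part generates a nil ideal), your argument does not close.
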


 Actually, the nilpotence property (for all powers of $X$) could serve as an alternative definition of finite--dimensional motive, as shown by a result of Jannsen \cite[Corollary 3.9]{J4}.
 
 \begin{conjecture}[Kimura \cite{Kim}]\label{findim}
  All smooth projective varieties have finite--dimensional motive.
  \end{conjecture}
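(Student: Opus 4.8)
The plan is to build up the class of finite--dimensional motives from the simplest pieces and to argue that this class eventually exhausts all of $\MM_{\rm rat}$; at the outset I must stress that conjecture \ref{findim} is, as of this writing, wide open, so any honest plan has to locate precisely where the available techniques break down rather than pretend to close the gap.

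First I would invoke the formal properties established by Kimura and O'Sullivan: the full subcategory of finite--dimensional motives is closed under direct sums, tensor products, duals, and direct summands, and — crucially — under the image of any correspondence. Combined with the fact that the motive $h(C)$ of a smooth projective curve is finite--dimensional (its Chow--K\"unneth summand $h^1(C)$ being the motive of the Jacobian, which is \emph{oddly} finite--dimensional, while $h^0(C)$ and $h^2(C)$ are \emph{evenly} finite--dimensional), this already yields finite--dimensionality for every variety that is a direct summand, via an algebraic correspondence, of a product of curves or abelian varieties. This is exactly the route by which the special quartics of theorem \ref{main} are handled.

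The natural next step is to attack an arbitrary smooth projective $X$ of dimension $n$ by producing a Chow--K\"unneth decomposition $\Delta_X=\sum_i \pi_i$ and reducing finite--dimensionality of $h(X)$ to that of each summand $h^i(X)$ separately. The summands governed by divisor and intersection classes are of Lefschetz type and are evenly finite--dimensional for formal reasons, so the content concentrates in the transcendental part — for a surface, in $t_2(X)=h^2_{tr}(X)$, whose realisation is the transcendental lattice. One would then try to exhibit this transcendental piece as a direct summand, cut out by an algebraic correspondence, of the motive of some auxiliary variety already known to be finite--dimensional.

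The hard part — and the reason the conjecture remains unproven — is precisely this last reduction. There is no known construction that, for a general variety, expresses the transcendental motive in terms of curves, abelian varieties, or any other finite--dimensional building block: for a very general surface of general type, a very general K3 surface, or a general hypersurface of high degree and dimension, the transcendental motive carries no such accessible structure, and none of the closure properties above gain traction. By Kimura's nilpotence theorem \ref{nilp} the conjecture is equivalent to the assertion that every numerically trivial self--correspondence on every power $X\times\cdots\times X$ is nilpotent; but establishing nilpotence in this generality would require controlling $A^\ast$ of arbitrarily high self--products, for which no method is available. Consequently, beyond assembling the formal machinery and isolating the transcendental obstruction, I do not expect the plan to close: a genuinely new idea, presumably one producing finite--dimensionality without routing through products of curves, appears to be needed.
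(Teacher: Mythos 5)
The statement you were asked to prove is Kimura's conjecture \ref{findim}, which the paper states \emph{without proof}: it is an open problem, and the paper's only gloss on it is the remark that all known examples of finite--dimensional motives are of abelian type and that Voevodsky's conjecture \ref{voe} is strictly stronger \cite{An}. So there is no argument in the paper to measure yours against, and your proposal is correct on precisely the point that matters: you do not claim a proof, and you accurately locate where the known machinery stops. Your survey matches the state of the art and the paper's own framing --- the closure of finite--dimensional motives under sums, tensor products, duals and direct summands, the odd finite--dimensionality of $h^1$ of a curve, and the resulting finite--dimensionality of anything of abelian type, which is indeed the mechanism behind theorem \ref{main} (via Shioda's trick and \cite[Theorem 4]{V2} for the threefold base case); your identification of the transcendental part as the obstruction is consistent with the paper's remark citing \cite[7.6]{Del} that, e.g., the motive of a very general quintic surface lies outside the abelian--type subcategory. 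Two small precisions: first, ``closed under the image of any correspondence'' should be read as ``closed under direct summands cut out by projectors,'' since $\MM_{\rm rat}$ is pseudo--abelian but not abelian and images of arbitrary morphisms need not exist; second, the equivalence of conjecture \ref{findim} with nilpotence of numerically trivial self--correspondences on all powers is due to Jannsen \cite[Corollary 3.9]{J4}, as the paper notes --- Kimura's theorem \ref{nilp} supplies only the forward implication. With those attributions adjusted, your assessment is sound: no gap to flag, because there is no proof to have.
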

  
   We are still far from knowing this, but at least there are quite a few non--trivial examples:
 
\begin{remark} 
The following varieties have finite--dimensional motive: abelian varieties, varieties dominated by products of curves \cite{Kim}, $K3$ surfaces with Picard number $19$ or $20$ \cite{P}, surfaces not of general type with $p_g=0$ \cite[Theorem 2.11]{GP}, certain surfaces of general type with $p_g=0$ \cite{GP}, \cite{PW}, \cite{V8}, Hilbert schemes of surfaces known to have finite--dimensional motive \cite{CM}, generalized Kummer varieties \cite[Remark 2.9(\rom2)]{Xu},
 threefolds with nef tangent bundle \cite{Iy} (an alternative proof is given in \cite[Example 3.16]{V3}), fourfolds with nef tangent bundle \cite{Iy2}, log--homogeneous varieties in the sense of \cite{Br} (this follows from \cite[Theorem 4.4]{Iy2}), certain threefolds of general type \cite[Section 8]{V5}, varieties of dimension $\le 3$ rationally dominated by products of curves \cite[Example 3.15]{V3}, varieties $X$ with $A^i_{AJ}(X)_{}=0$ for all $i$ \cite[Theorem 4]{V2}, products of varieties with finite--dimensional motive \cite{Kim}.
\end{remark}

\begin{remark}
It is an embarassing fact that up till now, all examples of finite-dimensional motives happen to lie in the tensor subcategory generated by Chow motives of curves, i.e. they are ``motives of abelian type'' in the sense of \cite{V3}. On the other hand, there exist many motives that lie outside this subcategory, e.g. the motive of a very general quintic hypersurface in $\PP^3$ \cite[7.6]{Del}.
\end{remark}

\section{Main}

\begin{theorem}\label{main} The following quartics have finite--dimensional motive:

\noindent
(\rom1) a smooth quartic $X\subset\PP^{3k-1}(\C)$ given by an equation
  \[  f_1(x_0,x_1,x_2)+f_2(x_3,x_4,x_5)+\cdots + f_k(x_{3k-3},x_{3k-2},x_{3k-1})  =0\ ,\]
  where the $f_i$ define smooth quartic curves;
  
  \noindent
  (\rom2) a smooth quartic $X\subset\PP^{3k}(\C)$ given by an equation
   \[ f_1(x_0,x_1,x_2)+f_2(x_3,x_4,x_5)+\cdots + f_k(x_{3k-3},x_{3k-2},x_{3k-1}) +(x_{3k})^4 =0\ ,\]
  where the $f_i$ define smooth quartic curves;
   
  \noindent
  (\rom3) a smooth quartic $X\subset\PP^{3k+1}(\C)$ given by an equation 
   \[ f_1(x_0,x_1,x_2)+ f_2(x_3,x_4,x_5)+\cdots + f_k(x_{3k-3},x_{3k-2},x_{3k-1}) +h(x_{3k},x_{3k+1}) =0\ ,\]
  where the $f_i$ define smooth quartic curves;   
      \end{theorem}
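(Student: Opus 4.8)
The plan is to propagate finite--dimensionality through Shioda's inductive ``join'' structure \cite{Sh}, \cite{KS}, using only the formal closure properties of the Kimura--O'Sullivan notion: finite--dimensional motives are stable under direct sums, tensor products and direct summands, and a dominant generically finite morphism $g\colon Y\to X$ of smooth projective varieties of the same dimension realizes $h(X)$ as a direct summand of $h(Y)$ (the projector being $\tfrac1{\deg g}\,{}^t\Gamma_g\circ\Gamma_g$). Since $h(Y_1\times Y_2)=h(Y_1)\otimes h(Y_2)$, it suffices to exhibit $X$ as dominated by a product of varieties already known to be finite--dimensional. The structural fact that makes this consistent is the nilpotence theorem \ref{nilp}, which is also what the application to smash--equivalence will ultimately exploit.

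The concrete input from \cite{Sh}, \cite{KS} is the following elementary remark: if $\phi,\psi$ are quartic forms in disjoint sets of variables and $Y=\{\phi+\psi=0\}$, then for a fixed $\zeta$ with $\zeta^4=-1$ the scaling $\big((\mathbf x:u),(\mathbf y:v)\big)\mapsto(v\,\mathbf x:\zeta\,u\,\mathbf y)$ defines a dominant rational map $\{\phi=u^4\}\times\{\psi=v^4\}\dashrightarrow Y$, since $\phi(v\mathbf x)+\psi(\zeta u\mathbf y)=u^4v^4(1+\zeta^4)=0$. Peeling off the blocks $f_1,\dots,f_k$ one at a time produces, in case (\rom1), a dominant rational map
\[
S_1\times S_2\times\cdots\times S_k\times F_k\ \dashrightarrow\ X,
\]
where $S_i=\{f_i=w_i^4\}\subset\PP^3$ is the cyclic quartic cover of $\PP^2$ branched along $C_i=\{f_i=0\}$, and $F_k=\{w_1^4+\cdots+w_{k-1}^4+v^4=0\}$ is a Fermat quartic of dimension $k-2$. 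As $2k+(k-2)=3k-2=\dim X$, the map is generically finite, so $h(X)$ is a direct summand of $h(S_1)\otimes\cdots\otimes h(S_k)\otimes h(F_k)$; the factor $F_k$ is finite--dimensional by \cite{Sh}. In cases (\rom2) and (\rom3) the residual Fermat factor has dimension $k-1$ instead of $k-2$, and in case (\rom3) there is in addition a smooth plane quartic curve factor $\{h=v^4\}$; all of these are finite--dimensional (by \cite{Sh}, and, for the curve, \cite{Kim}). The indeterminacy loci, cut out by the vanishing of the scaling factors $u,v$, are quartics of exactly the same shape in fewer variables, and are absorbed by an induction on $k$.

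It remains to prove the lemma on which everything hinges, and which is the genuine difficulty: each auxiliary surface $S_i=\{f_i=w_i^4\}$ has finite--dimensional motive. This is a smooth quartic $K3$ surface carrying the non--symplectic order--four automorphism $\sigma\colon w_i\mapsto\im\,w_i$. I would split $h(S_i)$ into $\mu_4$--eigen--motives: the invariant part is $h(\PP^2)=\mathbf 1\oplus\LLL\oplus\LLL^2$, and the $\sigma^2$--invariant part is the motive of $S_i/\langle\sigma^2\rangle$, the double cover of $\PP^2$ branched along $C_i$, a degree--two del Pezzo surface and hence rational; it contributes only Tate motives. Consequently the entire transcendental motive $t_2(S_i)$ lies in the $\pm\im$--eigenspaces of $\sigma$ and carries an action of $\QQ(\im)$. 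The crux of the whole theorem is to place this piece inside the tensor subcategory of $\MM_{\rm rat}$ generated by curves, so that Kimura's theorem applies; \emph{this is the main obstacle}. In the Fermat case each two--variable block already has a \emph{curve} as its cover, so the analogous transcendental pieces are visibly $H^1$'s of Fermat curves; for a generic ternary block $f_i$ the cover $S_i$ is a bona fide $K3$ surface and $t_2(S_i)$ is not manifestly the motive of a curve, so one must produce an explicit correspondence exhibiting $t_2(S_i)$ as a direct summand of a product of curve motives. I expect the construction of this correspondence — equivalently, verifying that these quartic cyclic covers are of abelian type — to be the only non--formal step; granting it, finite--dimensionality of $X$ follows from the reductions above together with theorem \ref{nilp}.
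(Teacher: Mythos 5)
Your reduction follows the same architecture as the paper's proof: the Katsura--Shioda join trick realizes $X$ as the image of a generically finite rational map from a product of lower--dimensional hypersurfaces of the same shape, the indeterminacy is resolved by blowing up a product of still smaller hypersurfaces of the same shape, and finite--dimensionality propagates through products, blow--ups and dominations. Your unrolled version (peeling off all $k$ blocks at once to land on $S_1\times\cdots\times S_k\times F_k$) is a harmless variant of the paper's one--block--at--a--time induction, and your dimension counts check out.

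However, you stop exactly where the proof has to deliver: you declare the finite--dimensionality of each cyclic cover $S_i=\{f_i=w_i^4\}$ to be ``the main obstacle'' and ``the only non--formal step'', and you do not supply it. Since every other step is formal, the argument as written is incomplete. The paper closes precisely this gap by citation: by \cite[Example 11.3]{vG}, the $4:1$ cyclic cover of $\PP^2$ branched along a smooth quartic curve is rationally dominated by a product of curves, hence has finite--dimensional motive by Kimura \cite{Kim}. Your eigenspace analysis of the order--four automorphism (the invariants give $h(\PP^2)$, the $\sigma^2$--invariants give a rational degree--two del Pezzo, so the transcendental part lives in the $\pm\im$--eigenspaces and carries a $\QQ(\im)$--action) is correct but does not by itself place $t_2(S_i)$ in the subcategory generated by curves; the content of van Geemen's example is exactly the explicit domination by a product of curves that you say you cannot produce. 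You should either cite that result or reproduce its construction. A secondary difference worth noting: for the base case (\rom3) at $k=1$ the paper does not argue via products of curves at all, but invokes Vial's result that a smooth quartic threefold has $A^\ast_{AJ}=0$ and hence finite--dimensional motive \cite[Theorem 4]{V2}; your route would bypass this, but only once the $K3$ lemma is actually in place.
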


  \begin{proof} The proof is by induction, using Shioda's trick in the guise of the following proposition (this is \cite[Remark 1.10]{KS}):
  
  \begin{proposition}[Katsura--Shioda \cite{KS}]\label{sh} Let $Z\subset \PP^{m_1+m_2}$ be a smooth hypersurface of degree $d$ defined by an equation
    \[ g_1(x_0,\ldots,x_{m_1})+g_2(x_{m_1+1},\ldots,x_{m_1+m_2})=0\ .\]
   Let $Z_1$ resp. $Z_2$ be the smooth hypersurfaces of dimension $m_1$ resp. $m_2-1$, defined as
     \[ g_1(x_0,\ldots,x_{m_1})+y^d=0\ ,\]
     resp.
     \[ g_2(x_{m_1+1},\ldots,x_{m_1+m_2})+z^d=0\ .\]   
     Then there exists a dominant rational map
     \[ \phi\colon\ \ Z_1\times Z_2\ \dashrightarrow\ Z\ ,\]
     and the indeterminacy of $\phi$ is resolved by blowing up the locus
     \[  \Bigl( Z_1\cap (y=0)\Bigr)\times \Bigl( Z_2\cap (z=0)\Bigr)\ \subset\ Z_1\times Z_2\ .\]
     \end{proposition}
      
     The induction base is $k=1$. In this case, the quartic defined in (\rom1) has finite--dimensional motive because it is a curve; the quartic surface defined in (\rom2) has finite--dimensional motive because it is dominated by a product of curves \cite[Example 11.3]{vG}; case (\rom3) is OK because any quartic threefold $X$ has
     \[ A^\ast_{AJ}(X)=0 \]
     and as such has finite--dimensional motive \cite[Theorem 4]{V2}.
     
     Next, we suppose the theorem is true for $k-1$. 
     
     Let $X\subset\PP^{3k-1}$ be a quartic as in (\rom1). According to proposition \ref{sh}, $X$ is rationally dominated by the product
     $X_1\times S$, where $S$ is a $K3$ surface as in \cite[Example 11.3]{vG}, and $X_1$ is of type
      \[ f_1(x_0,x_1,x_2)+f_2(x_3,x_4,x_5)+\cdots + f_{k-1}(x_{3k-6},x_{3k-5},x_{3k-4}) +y^4 =0\ .\] 
      We have seen that $S$ has finite--dimensional motive, and $X_1$ has finite--dimensional motive by induction. $X$ is dominated by the blow--up of $X_1\times S$ with center
      $Z_1\times C$, where $C\subset S$ is a curve, and $Z_1\subset X_1$ is of type
      \[       f_1(x_0,x_1,x_2)+f_2(x_3,x_4,x_5)+\cdots + f_{k-1}(x_{3k-6},x_{3k-5},x_{3k-4}) =0\ .\]
      The product $Z_1\times C$ has finite--dimensional motive (by induction), hence so has the blow--up and so has $X$.
      
      Let $X\subset\PP^{3k}$ be a quartic as in (\rom2). Now $X$ is rationally dominated by $X_1\times X_2$, where $X_1$ is of type
      \[   f(x_0,x_1,x_2)+f_2(x_3,x_4,x_5)+\cdots + f_{k-1}(x_{3k-6},x_{3k-5},x_{3k-4}) +y^4 =0\ ,\]    
      and $X_2$ is a fourfold of type
      \[   f_k(x_{3k-3},x_{3k-2},x_{3k-1}) +(x_{3k})^4    +z^4=0\ .\]
      Both have finite--dimensional motive by induction. To resolve the indeterminacy, we need to blow up $X_1\times X_2$ in the center of type $Z_1\times Z_2$, where
      $Z_1\subset X_1$ is of type
        \[    f(x_0,x_1,x_2)+f_2(x_3,x_4,x_5)+\cdots + f_{k-1}(x_{3k-6},x_{3k-5},x_{3k-4}) =0\ .\]    
       This $Z_1$ has finite--dimensional motive (by induction), and so does $Z_2$ (any smooth quartic threefold has finite--dimensional motive). It follows that $X$ has finite--dimensional motive.
       
       Lastly, let $X\subset\PP^{3k+1}$ be a quartic as in (\rom3). Then $X$ is rationally dominated by $X_1\times C$, where $X_1$ is of type
       \[  f(x_0,x_1,x_2)+ f_2(x_3,x_4,x_5)+\cdots + f_k(x_{3k-3},x_{3k-2},x_{3k-1}) +y^4 =0\ ,\]   
       and $C$ is a curve. By induction, we may suppose $X_1$ has finite--dimensional motive (because we have already treated case (\rom2)). The indeterminacy locus is of the form $Z_1\times\{\hbox{points}\}$, where $Z_1$ is of type
       \[ f(x_0,x_1,x_2)+ f_2(x_3,x_4,x_5)+\cdots + f_k(x_{3k-3},x_{3k-2},x_{3k-1}) =0\ .\]
       This $Z_1$ also has finite--dimensional motive (because we have already treated case (\rom1)), hence so does the blow--up and so does $X$.
       \end{proof}

\section{Application: Voevodsky's conjecture}

\begin{definition}[Voevodsky \cite{Voe}]\label{sm} Let $X$ be a smooth projective variety. A cycle $a\in A^r(X)$ is called {\em smash--nilpotent\/} 
if there exists $m\in\NN$ such that
  \[ \begin{array}[c]{ccc}  a^m:= &\undermat{(m\hbox{ times})}{a\times\cdots\times a}&=0\ \ \hbox{in}\  A^{mr}(X\times\cdots\times X)_{}\ .
  \end{array}\]
  \vskip0.6cm

Two cycles $a,a^\prime$ are called {\em smash--equivalent\/} if their difference $a-a^\prime$ is smash--nilpotent. We will write $A^r_\otimes(X)\subset A^r(X)$ for the subgroup of smash--nilpotent cycles.
\end{definition}

\begin{conjecture}[Voevodsky \cite{Voe}]\label{voe} Let $X$ be a smooth projective variety. Then
  \[  A^r_{num}(X)\ \subset\ A^r_\otimes(X)\ \ \ \hbox{for\ all\ }r\ .\]
  \end{conjecture}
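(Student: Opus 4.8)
The plan is to establish conjecture \ref{voe} for the odd--dimensional members of theorem \ref{main}; together with the (elementary and well known) reverse inclusion $A^r_\otimes(X)\subset A^r_{num}(X)$ this is exactly proposition \ref{smash}. The reverse inclusion is immediate: if $a^{\times m}=0$ then for any complementary cycle $b$ one has $(a\cdot b)^m=a^{\times m}\cdot b^{\times m}=0$, forcing every intersection number $a\cdot b$ to vanish. So let $X\subset\PP^{n+1}(\C)$ be a smooth quartic of \emph{odd} dimension $n$ as in theorem \ref{main}; by that theorem $h(X)$ is finite--dimensional. First I would use that a hypersurface has algebraic cohomology outside the middle degree, so that the Chow--K\"unneth decomposition takes the shape
\[ h(X)=\Bigl(\bigoplus_{j=0}^{n}\LLL^{\,j}\Bigr)\ \oplus\ M,\qquad M:=h^n(X), \]
in which the Tate summands $\LLL^{\,j}$ account for the powers of the hyperplane class and $M$ realizes to the primitive middle cohomology $H^n(X)$, concentrated in the odd degree $n$.

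I would then locate the numerically trivial cycles within this splitting. For $a\in A^r(X)$ only the summands cut out by $\pi_{2r}$ and $\pi_n$ survive: the former is a rational multiple of the $r$--th power of the hyperplane class, spanning $A^r(\LLL^{\,r})\cong\QQ$, the latter an element of $A^r(M)$. A numerically trivial cycle has vanishing hyperplane--power component, so $A^r_{num}(X)\subset A^r(M)$; conversely the cycle class of any element of $A^r(M)$ lies in $H^{2r}(X)$ while $M$ carries cohomology only in the odd degree $n\neq 2r$, so $A^r(M)$ is already homologically trivial. Hence $A^r_{num}(X)=A^r(M)$, and everything reduces to showing that cycles supported on the middle summand $M$ are smash--nilpotent.

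The mechanism for this is the oddness of $M$. As a summand of a finite--dimensional motive whose realization sits in odd degree, $M$ is oddly finite--dimensional: its even Kimura part realizes to zero, hence is numerically trivial, hence vanishes \cite{Kim}, leaving $\mathrm{Sym}^N M=0$ for some $N$. The key observation is that exterior powers of an algebraic cycle are symmetric for the \emph{sign--free} commutativity constraint of $\MM_{\rm rat}$: for $a\in A^r(M)$ the product $a^{\times N}\in A^{Nr}(X^N)$ is invariant under the geometric permutation action of the symmetric group, so it is carried by the image of the symmetrizer, namely $\mathrm{Sym}^N M=0$. Thus $a^{\times N}=0$ and $a$ is smash--nilpotent, giving conjecture \ref{voe} for $X$.

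The delicate point --- and the step I expect to be the real obstacle --- is this last matching of parities. One must check that in $\MM_{\rm rat}$ the exterior self--product $a^{\times N}$ is genuinely symmetric, so that it is annihilated by $\mathrm{Sym}^N M$ rather than only by $\wedge^N M$; this is precisely where the hypothesis $n$ odd enters, since only then is the middle motive $M$ oddly (and not evenly) finite--dimensional. By contrast the structural inputs --- the Tate shape of the Chow--K\"unneth projectors of a hypersurface and the compatibility of Tate twists with $\mathrm{Sym}$ --- are routine and may be imported from the finite--dimensionality literature cited above.
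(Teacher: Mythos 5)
The statement you were given is Voevodsky's conjecture itself, which the paper does not (and cannot) prove in general; what it proves is Proposition \ref{smash}, the special case of odd-dimensional quartics from Theorem \ref{main}, and your argument for that case is correct and essentially identical to the paper's: split off the Tate summands of the hypersurface motive, observe that $h_n(X)$ is oddly finite-dimensional when $n$ is odd, and invoke Kimura's result that cycles on an oddly finite-dimensional motive are smash-nilpotent. Your unpacking of the symmetry of $a^{\times N}$ is just the proof of Kimura's Proposition 6.1, which the paper cites as a black box.
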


\begin{remark} It is known \cite[Th\'eor\`eme 3.33]{An} that conjecture \ref{voe} implies (and is strictly stronger than) conjecture \ref{findim}.
\end{remark}

\begin{proposition}\label{smash} Let $X$ be a smooth quartic of the type as in theorem \ref{main}. Suppose the dimension $n$ of $X$ is odd. Then
  \[  A^r_{num}(X)\ \subset\ A^r_\otimes(X)\ \ \ \hbox{for\ all\ }r\ .\]
  \end{proposition}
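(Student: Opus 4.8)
The plan is to push every numerically trivial cycle into the middle--dimensional primitive motive and to exploit that, $n$ being odd, this motive is \emph{oddly} finite--dimensional; for such motives the external powers of cycles eventually vanish, which is precisely smash--nilpotence. Concretely, since $X\subset\PP^{n+1}$ is a smooth hypersurface of dimension $n$, the Lefschetz hyperplane theorem gives $H^i(X)\cong H^i(\PP^{n+1})$ for $i\neq n$, so all cohomology away from the middle degree is spanned by powers of the hyperplane class; as $n$ is odd, the entire odd cohomology is concentrated in degree $n$. These Tate classes are algebraic and lift to orthogonal projectors in $A^n(X\times X)$, yielding a decomposition of Chow motives
\[ h(X)\ =\ \Bigl(\bigoplus_{i}\LLL^{\,i}\Bigr)\ \oplus\ h^n(X)\ , \]
in which each $\LLL^{\,i}$ is a Lefschetz motive and $h^n(X)$ realises to the primitive cohomology $H^n(X)$.

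Next I would localise the numerically trivial cycles. For each Lefschetz summand $A^\ast(\LLL^{\,i})=\QQ$ is generated by a numerically non--trivial algebraic class, so $A^\ast_{num}(\LLL^{\,i})=0$; hence $A^r_{num}(X)=A^r_{num}(h^n(X))$ and every numerically trivial cycle already lives in the primitive motive. Moreover, since $n$ is odd the realisation of $h^n(X)$ sits in odd degree, so any algebraic class in this summand is automatically homologically trivial: on $h^n(X)$ the notions ``numerically trivial'' and ``arbitrary cycle'' coincide.

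The heart of the matter is that $h^n(X)$ is oddly finite--dimensional. It is a direct summand of $h(X)$, which is finite--dimensional by theorem \ref{main}, hence finite--dimensional; splitting $h^n(X)=M^+\oplus M^-$ into its evenly and oddly finite--dimensional parts, the realisation of $M^+$ would lie in even degrees, whereas that of $h^n(X)$ is concentrated in the odd degree $n$, forcing $M^+=0$. Therefore $\mathrm{Sym}^N h^n(X)=0$ for some $N$. Now for any $a\in A^r(h^n(X))$ the external power $a^{\times N}$ is invariant under permutation of the factors, so, working in Kimura's tensor category \cite{Kim}, \cite{An}, it factors through $\mathrm{Sym}^N h^n(X)=0$; thus $a^{\times N}=0$ and $a$ is smash--nilpotent. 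Combined with the previous paragraph this gives $A^r_{num}(X)\subset A^r_\otimes(X)$ for every $r$.

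The step I expect to be delicate is this last one, where the sign conventions are crucial: one must use the corrected commutativity constraint, under which a symmetric external power factors through $\mathrm{Sym}^N$ rather than $\wedge^N$, so that odd finite--dimensionality ($\mathrm{Sym}^N=0$) is exactly what forces the vanishing. This is precisely where oddness of $n$ enters: for $n$ even the middle motive would be evenly finite--dimensional, the relevant symmetric powers would not vanish, and the argument would break down.
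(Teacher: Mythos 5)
Your proposal is correct and follows essentially the same route as the paper: decompose $h(X)$ into Lefschetz motives plus the middle piece $h_n(X)$, reduce $A^r_{num}(X)$ to that piece, observe that oddness of $n$ makes $h_n(X)$ oddly finite--dimensional (vanishing of $\mathrm{Sym}^N$ first homologically, then in $\MM_{\rm rat}$ by finite--dimensionality), and conclude by the symmetric--power argument. The only difference is that you reprove inline the final step, which the paper simply cites as Kimura's Proposition 6.1.
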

  
  \begin{proof} As $X$ is a hypersurface, the K\"unneth components are algebraic and the Chow motive of $X$ decomposes
    \[ h(X)=h_n(X)\oplus \bigoplus_j \LLL(n_j)\ \ \ \hbox{in}\ \MM_{\rm rat}\ .\]
    Since $A^r_{num}\bigl(\LLL(n_j)\bigr)=0$, we have
    \[ A^r_{num}(X)= A^r_{num}\bigl(h_n(X)\bigr)\ .\]
    The motive $h_n(X)$ is oddly finite--dimensional. (Indeed, as $n$ is odd we have that the motive $\hbox{Sym}^m h_n(X)\in \MM_{\rm hom}$ is $0$ for some $m>>0$. By finite--dimensionality, the same then holds in $\MM_{\rm rat}$.)
    The proposition now follows from the following result (which is \cite[Proposition 6.1]{Kim}, and which is also applied in \cite{KSeb} where I learned this):
    
    \begin{proposition}[Kimura \cite{Kim}] Let $M\in\MM_{\rm rat}$ be oddly finite--dimensional. Then
       \[ A^r_{}(M)\ \subset\ A^r_\otimes(M)\ \ \ \hbox{for\ all\ }r\ .\]
     \end{proposition}
     
     \end{proof}

\vskip0.6cm

\begin{acknowledgements} This note is a belated echo of the Strasbourg 2014---2015 groupe de travail based on the monograph \cite{Vo}. Thanks to all the participants for the pleasant and stimulating atmosphere. 
Many thanks to Yasuyo, Kai and Len for lots of enjoyable post--work ap\'eritifs. 
\end{acknowledgements}

%\vskip1cm


\begin{thebibliography}{dlPG99}


%\bibitem{Ab} S. Abdulali, Abelian varieties and the general Hodge conjecture, Comp. Math. 109 (1997), 341---355,

\bibitem{An} Y. Andr\'e, Motifs de dimension finie (d'apr\`es S.-I. Kimura, P. O'Sullivan,...), S\'eminaire Bourbaki 2003/2004, Ast\'erisque 299 Exp. No. 929, viii, 115---145,

%\bibitem{And} Y. Andr\'e, Pour une th\'eorie inconditionnelle des motifs, Publications math\'ematiques de l'IHES 83 no 1 (1996), 5---49,

%\bibitem{A} D. Arapura, Motivation for Hodge cycles, Advances in Math. vol. 207 (2006), 762---781,

%\bibitem{Ay} J. Ayoub, Motives and algebraic cycles: a selection of conjectures and open questions, preprint available from user.math.uzh.ch/ayoub/,

%\bibitem{Beau0} A. Beauville, Some remarks on K\"ahler manifolds with $c_1=0$, in: Classification of algebraic and analytic manifolds (Katata, 1982), Birkh\"auser Boston, Boston 1983,

%\bibitem{Beau} A. Beauville, Sur l'anneau de Chow d'une vari\'et\'e ab\'elienne, Math. Ann. 273 (1986), 647---651,

%\bibitem{Beau2} A. Beauville, Some surfaces with maximal Picard number, Journal de l'Ecole Polytechnique Tome 1 (2014), 101---116,

%\bibitem{Bl2} S. Bloch, Some elementary theorems about algebraic cycles on abelian varieties, Invent. Math. 37 (1976), 215---228,

%\bibitem{B} S. Bloch, Lectures on algebraic cycles, Duke Univ. Press Durham 1980,

%\bibitem{BO} S. Bloch and A. Ogus, Gersten's conjecture and the homology of schemes, Ann. Sci. Ecole Norm. Sup. 4 (1974), 181---202,

%\bibitem{BS} S. Bloch and V. Srinivas, Remarks on correspondences and algebraic cycles, American Journal of Mathematics Vol. 105, No 5 (1983), 1235---1253,

%\bibitem{Bo} C. Borcea, K3 surfaces with involution and mirror pairs of Calabi---Yau manifolds, in: Mirror symmetry,
%II, AMS Studies in Adv. Math. 1, AMS 1997,

\bibitem{Br} M. Brion, Log homogeneous varieties, in: Actas del XVI Coloquio Latinoamericano de Algebra, 
Revista Matem\'atica Iberoamericana, Madrid 2007,
arXiv: math/0609669,

%\bibitem{Bus} N. Buskin, Every rational Hodge isometry between two $K3$ surfaces is algebraic, arXiv:1510.02852v1,

%\bibitem{Cat} F. Catanese, Surfaces with $K^2=p_g=1$ and their period mapping, in:  Algebraic geometry (Copenhagen, 1978), Springer Lecture Notes in Mathematics, Springer 1979,

%\bibitem{CD} Clingher and C. Doran, Note on a geometric isogeny of $K3$ surfaces, Int. Math. Research Notices 2011 (2011), 3657---3687,

%\bibitem{CDP} P. Candelas, X. de la Ossa and L. Parkes, Generalized Calabi--Yau manifolds and the mirror of a rigid manifold, Nucl. Phys. B 406 (1993), 115---154,

%\bibitem{CG} A. Cattaneao and A. Garbagnati, Calabi--Yau $3$--folds of Borcea--Voisin type and elliptic fibrations, arXiv:1312.3481v1,

%\bibitem{CHSW} P. Candelas, G. Horowitz, A. Strominger and E. Witten, Vacuum configurations for superstrings, Nucl. Phys. B 258 (1985), 46---74,

\bibitem{CM} 
M. de Cataldo and L. Migliorini, The Chow groups and the motive of the Hilbert scheme of points on a
surface, Journal of Algebra 251 no. 2 (2002), 824---848,

%\bibitem{ChM} F. Charles and E. Markman, The standard conjectures for holomorphic symplectic varieties deformation equivalent to Hilbert schemes of $K3$ surfaces, Comp. Math. 149 (2013),
%481---494,

\bibitem{Del} P. Deligne, La conjecture de Weil pour les surfaces $K3$, Invent. Math. 15 (1972), 206---226,

%\bibitem{DM} C. Deninger and J. Murre, Motivic decomposition of abelian schemes and the Fourier transform. J. reine u.
%angew. Math. 422 (1991), 201---219,

%\bibitem{Diaz} H. Diaz, The motive of the Fano surface of lines, arXiv:1602.06403v1,

%\bibitem{EL} H. Esnault and M. Levine, Surjectivity of cycle maps, in: Journ\'ees de G\'eom\'etrie alg\'ebrique d'Orsay, Juillet 1992, Ast\'erisque 218 (1993),

%\bibitem{LFu} L. Fu, On the action of symplectic automorphisms on the $CH_0$--groups of some hyper-K\"ahler fourfolds, Math. Z. 280 (2015), 307---334,

%\bibitem{LFu} L. Fu, Decomposition of small diagonals and Chow rings of hypersurfaces and Calabi--Yau complete intersections, Advances in Mathematics (2013), 894---924,

%\bibitem{F} W. Fulton, Intersection theory, Springer--Verlag Ergebnisse der Mathematik, Berlin Heidelberg New York Tokyo 1984,

%\bibitem{GS} S. Galkin and E. Shinder, The Fano variety of lines and rationality problem for a cubic hypersurface, arXiv:1405.5154,

\bibitem{vG} B. van Geemen, Kuga--Satake varieties and the Hodge conjecture, in:
The Arithmetic and Geometry of Algebraic Cycles, Banff 1998 (B. Gordon et alii, eds.), Kluwer Dordrecht 2000,

%\bibitem{vG2} B. van Geemen, Half twists of Hodge structures of CM--type, J. Math. Soc. Japan Vol. 53 No. 4 (2001), 813---833,


\bibitem{GP} V. Guletski\u{\i} and C. Pedrini, The Chow motive of the Godeaux surface, in:
Algebraic Geometry, a volume in memory of Paolo Francia (M.C. Beltrametti et alii, editors),
Walter de Gruyter, Berlin New York, 2002,

%\bibitem{HI} A. Hirschowitz and J. Iyer, Hilbert schemes of fat r--planes and the triviality of Chow groups of complete intersections. In: Vector
%bundles and complex geometry, Contemp. Math. 522, Amer. Math. Soc., Providence 2010,

%\bibitem{Hu} D. Huybrechts, Symplectic automorphisms of $K3$ surfaces of arbitrary order, Math. Res. Letters 19 (2012), 947---951,

\bibitem{Iv} F. Ivorra, Finite dimensional motives and applications (following S.-I. Kimura, P. O'Sullivan and others), in:
        Autour des motifs, Asian-French summer school on algebraic geometry and number theory,
       Volume III, Panoramas et synth\`eses, Soci\'et\'e math\'ematique de France 2011,
          
\bibitem{Iy} J. Iyer, Murre's conjectures and explicit Chow--K\"unneth projectors for varieties with a nef tangent bundle, Transactions of the Amer. Math. Soc. 361 (2008), 1667---1681,

\bibitem{Iy2} J. Iyer, Absolute Chow--K\"unneth decomposition for rational homogeneous bundles and for log homogeneous varieties, Michigan Math. Journal
 Vol.60, 1 (2011), 79---91,
 
%\bibitem{J} U. Jannsen, Mixed motives and algebraic K--theory, Springer Lecture Notes in Mathematics 1400 (1990),

%\bibitem{J} U. Jannsen, 
%Motives, numerical equivalence, and semi-simplicity, Invent. Math. 107(3) (1992), 447---452, 

%\bibitem{J2} U. Jannsen, Motivic sheaves and filtrations on Chow groups, in: Motives (U. Jannsen et alii, editors), Proceedings of Symposia in Pure Mathematics Vol. 55 (1994), Part 1,  

%\bibitem{J3} U. Jannsen, Equivalence relations on algebraic cycles, in: The arithmetic and geometry
%of algebraic cycles (B. Gordon et alii, eds.), Banff Conference 1998, Kluwer,

\bibitem{J4} U. Jannsen, On finite--dimensional motives and Murre's conjecture, in: Algebraic cycles and motives (J. Nagel and C. Peters, editors), Cambridge University Press, Cambridge 2007,

%\bibitem{KMP} B. Kahn, J. P. Murre and C. Pedrini, On the transcendental part of the motive of a surface, in: Algebraic cycles and motives (J. Nagel and C. Peters, eds.), Cambridge University Press, Cambridge 2007,

\bibitem{KSeb} B. Kahn and R. Sebastian, Smash--nilpotent cycles on abelian 3--folds, Math. Res. Letters 16 (2009), 1007---1010,

\bibitem{KS} T. Katsura and T. Shioda, On Fermat varieties, Tohoku Math. J. Vol. 31 No. 1 (1979), 97---115,

\bibitem{Kim} S. Kimura, Chow groups are finite dimensional, in some sense,
Math. Ann. 331 (2005), 173---201,

%\bibitem{K0} S. Kleiman, Algebraic cycles and the Weil conjectures, in: Dix expos\'es sur la cohomologie des sch\'emas, North--Holland Amsterdam, 1968, 359---386, 

%\bibitem{K} S. Kleiman, The standard conjectures, in: Motives (U. Jannsen et alii, editors), Proceedings of Symposia in Pure Mathematics Vol. 55 (1994), Part 1, 

%\bibitem{Ko} K. Koike, Elliptic $K3$ surfaces admitting a Shioda--Inose structure, Comment. Math. Univ. St. Pauli 61 No 1 (2012), 77---86,

%\bibitem{Kun} K. K\"unnemann, A Lefschetz decomposition for Chow motives of abelian schemes, Inv. Math. 113 (1993), 85---102,

%\bibitem{moi} R. Laterveer, Algebraic varieties with small Chow groups, J. Math. Kyoto Univ. Vol. 38 No 4 (1998), 673---694,

%\bibitem{yet} R. Laterveer, Yet another version of Mumford's theorem, Archiv Math. (2015),   

%\bibitem{moi2} R. Laterveer, A brief note concerning hard Lefschetz for Chow groups, to appear in Canadian Math. Bulletin, 

%\bibitem{moimult} R. Laterveer, On a multiplicative version of Bloch's conjecture, submitted,

%\bibitem{excubic} R. Laterveer, A family of cubic fourfolds with finite--dimensional motive, preprint,

%\bibitem{exK3} R. Laterveer, A family of $K3$ surfaces with finite--dimensional motive, Arch. Math.,

%\bibitem{blochinv} R. Laterveer, Some results on a conjecture of Voisin for surfaces of geometric genus one, Bol. Unione Mat. Ital.,

%\bibitem{Lew} J. Lewis, Cylinder homomorphisms and Chow groups, Math. Nachr. 160 (1993), 205---221,

%\bibitem{Li} C. Liedtke, Supersingular $K3$ surfaces are unirational, Invent. Math. 200 (2015), 979---1014,

%\bibitem{Ma} S. Ma, On $K3$ surfaces which dominate Kummer surfaces, Proceedings of the American Math. Soc. 141 No 1 (2013), 131---137,

%\bibitem{Mor} D. Morrison, Algebraic cycles on products of surfaces, in: Proceedings, Algebraic Geometry
%Symposium, T\^ohoku University, 1984,

%\bibitem{Mo} D. Morrison, On $K3$ surfaces with large Picard number, Invent. Math. 75 No 1 (1984), 105---121,

%\bibitem{Mor2} D. Morrison, Isogenies between algebraic surfaces with geometric genus one, Tokyo J. Math. Vol 10 no 1 (1987), 179---187,

%\bibitem{Mu} S. Mukai, On the moduli space of bundles on $K3$ surfaces I, in: Vector bundles on algebraic varieties (Bombay, 1984), Tata Institute of Fund. Research, Bombay 1987,

%\bibitem{M} D. Mumford, Rational equivalence of $0$--cycles on surfaces, J. Math. Kyoto Univ. Vol. 9 No 2 (1969), 195---204,

%\bibitem{Mur} J. Murre, On a conjectural filtration on the Chow groups of an algebraic variety, parts I and II, Indag. Math. 4 (1993), 177---201,

\bibitem{MNP} J. Murre, J. Nagel and C. Peters, Lectures on the theory of pure motives, Amer. Math. Soc. University Lecture Series 61, Providence 2013,

%\bibitem{Murt} V. Murty, Algebraic cycles on abelian varieties, Duke Math. J. 50 (1983), 487---504,


%\bibitem{OG} K. O'Grady, Decomposable cycles and Noether--Lefschetz loci, arXiv:1502.07897v2,

%\bibitem{OS} K. Oguiso and J. Sakurai, Calabi---Yau threefolds of quotient type, Asian J. Math. 5 (2001), 43---78,

%\bibitem{Or} D. Orlov, Equivalences of derived categories and $K3$ surfaces, J. Math. Sci. (New York) 84 (1997), 1361---1381,

%\bibitem{OS} P. O'Sullivan, Algebraic cycles on an abelian variety, J. Reine Angew. Math. 654 (2011), 1---81,

%\bibitem{Ot} A. Otwinowska, Remarques sur les groupes de Chow des hypersurfaces de petit degr\'e, C. R.
%Acad. Sci. Paris S\'erie I Math. 329 (1999), no. 1, 51---56,

%\bibitem{Par} K. Paranjape, Abelian varieties associated to certain K3 surfaces, Comp. Math. 68 (1988), 11---22,

\bibitem{P} C. Pedrini, On the finite dimensionality of a $K3$ surface, Manuscripta Mathematica 138 (2012), 59---72,

%\bibitem{Ped} C. Pedrini, Bloch's conjecture and valences of correspondences for $K3$ surfaces, arXiv:1510.05832v1,

%\bibitem{PP} A. del Padrone and C. Pedrini, Derived categories of coherent sheaves and motives of K3 surfaces, in: Regulators (J. Gil et alii, eds.), Contemp. Math. 571, Amer. Math. Soc., Providence RI, 2012,

\bibitem{PW} C. Pedrini and C. Weibel, Some surfaces of general type for which Bloch's conjecture holds, to appear in: Period Domains, Algebraic Cycles, and Arithmetic, Cambridge Univ. Press, 2015,

%\bibitem{PS} C. Peters and J. Steenbrink, Mixed Hodge structures, Springer--Verlag Ergebnisse der Mathematik, Berlin Heidelberg New York 2008,

%\bibitem{RM} J. Ram\'on Mar\'\i, On the Hodge conjecture for products of certain surfaces, Collect. Math. 59, 1 (2008), 1---26,

%\bibitem{Rito} C. Rito, A note on Todorov surfaces, Osaka Journal of Math. 46(3) (2009), 685---693,

%\bibitem{Roh} C. Rohde, Maximal automorphisms of Calabi---Yau manifolds versus maximally unipotent monodromy,
%Manuscripta Math. 131 (2010), 459---474,

%\bibitem{R} A.A. Rojtman, The torsion of the group of 0--cycles modulo rational equivalence, Annals of Mathematics 111 (1980), 553---569,

%\bibitem{Schl} U. Schlickewei, The Hodge conjecture for self--products of certain $K3$ surfaces, Journal of Algebra 324 (2010), 507---529,

%\bibitem{Sch} T. Scholl, Classical motives, in: Motives (U. Jannsen et alii, editors), Proceedings of Symposia in Pure Mathematics Vol. 55 (1994), Part 1, 

%\bibitem{Seb2} R. Sebastian, Smash nilpotent cycles on varieties dominated by products of curves, Comp. Math. 149 (2013), 1511---1518,

%\bibitem{Seb} R. Sebastian, Examples of smash nilpotent cycles on rationally connected varieties, Journal of Algebra 438 (2015), 119---129,

%\bibitem{SV} M. Shen and C. Vial, The Fourier transform for certain hyperK\"ahler fourfolds, Memoirs of the AMS 240 (2016), no.1139,

\bibitem{Sh} T. Shioda, The Hodge conjecture for Fermat varieties, Math. Ann. 245 (1979), 175---184,

%\bibitem{Tan} S. Tankeev, On the standard conjecture of Lefschetz type for complex projective threefolds. II, Izvestiya Math. 75:5 (2011), 1047---1062,

%\bibitem{T} J. Tate, Algebraic cycles and poles of zeta functions, in: Arithmetical Algebraic Geometry, Harper and Row, New York 1965,

%\bibitem{Tod} A. Todorov, Surfaces of general type with $p_g= 1$ and $(K,K) = 1$, Ann. Sci. de l'Ecole Normale Sup. 13 (1980), 1---21,

%\bibitem{Vial} C. Vial, Pure motives with representable Chow groups, Comptes Rendus de l'Acad\'emie des Sciences 348 (2010), 1191---1195,

%\bibitem{V} C. Vial, Algebraic cycles and fibrations, Documenta Math. 18 (2013), 1521---1553,

\bibitem{V2} C. Vial, Projectors on the intermediate algebraic Jacobians, New York J. Math. 19 (2013), 793---822,

\bibitem{V3} C. Vial, Remarks on motives of abelian type, to appear in Tohoku Math. J.,

%\bibitem{V4} C. Vial, Niveau and coniveau filtrations on cohomology groups and Chow groups, Proceedings of the LMS 106(2) (2013), 410---444,

\bibitem{V5} C. Vial, Chow--K\"unneth decomposition for $3$-- and $4$--folds fibred by varieties with trivial Chow group of zero--cycles, J. Alg. Geom. 24 (2015), 51---80,

%\bibitem{K2} A. Vistoli, Alexander duality in intersection theory, Comp. Math. 70 (1989), 199---225,

\bibitem{Voe} V. Voevodsky, A nilpotence theorem for cycles algebraically equivalent to zero, Internat. Math. Research Notices 4 (1995), 187---198,

%\bibitem{V9} C. Voisin, Remarks on zero--cycles of self--products of varieties, in: Moduli of vector bundles, Proceedings of the Taniguchi Congress  (M. Maruyama,  editor), Marcel Dekker New York Basel Hong Kong 1994,

%\bibitem{V10} C. Voisin, Remarks on filtrations on Chow groups and the Bloch conjecture,
%Annali di matematica pura ed applicata 183 (2004), 421---438,

%\bibitem{V20} C. Voisin, Miroirs et involutions sur les surfaces K3, Ast\'erisque 218 (1993), 273---323,

%\bibitem{V12} C. Voisin, Sur les z\'ero--cycles de certaines hypersurfaces munies d'un automorphisme, Annali della Scuola Norm. Sup. di Pisa Vol. 29 (1993), 473---492,

%\bibitem{V13} C. Voisin, Chow rings and decomposition theorems for $K3$ surfaces and Calabi---Yau hypersurfaces, Geom. Topol.
%16 (2012), 433---473,

%\bibitem{V11} C. Voisin, Symplectic involutions of $K3$ surfaces act trivially on $CH_0$, Documenta Math. 17 (2012), 851---860,

%\bibitem{V0} C. Voisin, The generalized Hodge and Bloch conjectures are equivalent for general complete intersections, Ann. Sci. Ecole Norm. Sup. 46, fascicule 3 (2013), 449---475,

%\bibitem{V1} C. Voisin, The generalized Hodge and Bloch conjectures are equivalent for general complete intersections, II, J. Math. Sci. Univ. Tokyo  22 (2015), 491---517,

%\bibitem{V15} C. Voisin, On the universal $CH_0$ group of cubic hypersurfaces, to appear in Journal Eur. Math. Soc.,

\bibitem{V8} C. Voisin, Bloch's conjecture for Catanese and Barlow surfaces, J. Differential Geometry 97 (2014), 149---175,

\bibitem{Vo} C. Voisin, Chow Rings, Decomposition of the Diagonal, and the Topology of Families, Princeton University Press, Princeton and Oxford, 2014,

\bibitem{Xu} Z. Xu, Algebraic cycles on a generalized Kummer variety, arXiv:1506.04297v1.

\end{thebibliography}
\end{document}